\newtheorem{theorem}[subsection]{Theorem}
\newtheorem{proposition}[subsection]{Proposition}
\newtheorem{lemma}[subsection]{Lemma}
\newtheorem{corollary}[subsection]{Corollary}
\theoremstyle{definition}
\newtheorem{remark}[subsection]{Remark}
\newcommand{\PY}{Pirutka and Yagita }
\newcommand{\CS}{Colliot-Th\'el\`ene\ and Szamuely }
\title[The cycle map]{Representation theory and the cycle map of a classifying space}
\author{Masaki Kameko}
\address{
Department of Mathematical Sciences,
Shibaura Institute of Technology,
307 Minuma-ku Fukasaku, Saitama-City 337-8570, Japan}
\email{kameko@shibaura-it.ac.jp}
\thanks{The author is partially supported 
by the Japan Society for the Promotion of Science, 
Grant-in-Aid for Scientific Research (C) 25400097.}
\subjclass[2010]{Primary 14C15; Secondary 55R40, 55R35}
\keywords{Classifying space, Chow ring, cycle map, Chern class}
\begin{document} 
\begin{abstract}
We compute the Chern subgroup of the $4$-th integral cohomology group of  a certain classifying space and show that it is a proper subgroup. Such a classifying space gives us  new counterexamples for the integral Hodge and Tate conjectures modulo torsion.
\end{abstract}
\maketitle
\section{Introduction}
\label{section:1}
Let $G$ be a reductive complex linear algebraic group.
In  
\cite{totaro-1997},
Totaro defined the Chow group $CH^iBG$ of its classifying space $BG$ 
and studied the cycle map to the integral cohomology $H^{2i}(BG;\mathbb{Z})$. 
Approximating the classifying space with smooth projective varieties, 
we obtain interesting examples.
For instance,  counterexamples for the integral Hodge conjecture 
due to Atiyah and Hirzebruch
\cite{atiyah-hirzebruch-1962}
are obtained by studying the cycle map
\[
cl:CH^iBG\to H^{2i}(BG;\mathbb{Z})
\]
for $i=2$, $G=(\mathbb{Z}/p)^3 \times \mathbb{G}_m$ where $p$ is 
an odd prime number, 
$(\mathbb{Z}/p)^3$ is the elementary abelian $p$-group of rank $3$ and 
$\mathbb{G}_m$ is the multiplicative group of non-zero complex numbers.
Changing the base field to the algebraic closure of a finite field whose characteristic 
is prime to $p$, \CS obtained counterexamples 
for the  integral Tate conjecture in \cite{colliot-thelene-szamuely-2010}.
However, certain classes of varieties are known for which the integral Tate conjecture holds
and it is also an important subject of study, see, for example, Charles and Pirutka, \cite{charles-pirutka-2015}.
We refer the reader to  the introduction of Antieau's paper \cite{antieau-2015}
 for a detailed account of integral Hodge and Tate conjectures.
Since a reductive complex linear algebraic group is 
homotopy equivalent to its maximal compact subgroup,
as far as its cohomology is concerned, we may consider the corresponding 
compact Lie group instead of the reductive complex linear algebraic group.
In this paper, we  consider the special unitary group $SU(p^2)$ instead of 
the special linear group $SL_{p^2}$
and by abuse of notation, we write $CH^iBSU(p^2)$ for 
$CH^iBSL_{p^2}$ and so on.

Recently, using exceptional groups $G_2, F_4, E_8$ for $p=2, 3, 5$, respectively, \PY 
constructed counterexamples for the integral Hodge and Tate conjectures modulo torsion 
in \cite{pirutka-yagita-2014}.
In \cite{kameko-2015},
the author generalized the result of \PY using compact Lie groups
$SU(p)\times SU(p)/\mu_p$ to all prime numbers $p$, 
where $\mu_p$ is the cyclic group of order $p$ 
which acts diagonally on $SU(p)\times SU(p)$.
Very recently, in \cite{antieau-2015}, 
Antieau gave rather different and very interesting counterexamples 
for the integral Hodge and Tate conjectures modulo torsion for $p=2, 3$ 
by studying Chern classes of  $SU(8)/\mu_2$ and $SU(9)/\mu_3$, 
where $\mu_p$ is the subgroup of the center of $SU(p^2)$ of order $p$.
By the Riemann--Roch formula without denominators, 
Chow group $CH^2BG$ is generated by Chern classes of complex representations. 
So, the question on the surjectivity of the cycle map 
\[
cl:CH^2BG \to H^4(BG,\mathbb{Z})
\]
is equivalent to the question on the integral cohomology group $H^4(BG;\mathbb{Z})$
being generated by Chern classes of complex representations.
We refer the reader to Totaro' paper \cite{totaro-1999} and his book  \cite{totaro-2014} 
for details of these facts.
In \cite{antieau-2015}, Antieau calculated second Chern classes for all irreducible complex representations 
of $SU(8)/\mu_2$ and $SU(9)/\mu_3$
using a computer algebra system and showed that these second 
Chern classes do not generate the $4$-th integral cohomology groups.
Also, Antieau conjectured similar results hold for all odd prime numbers, that is, 
Theorem~\ref{theorem:1.1}
 \cite[Conjecture 2.10]{antieau-2015} below.
 
In this paper, 
we call the subgroup of the integral cohomology group $H^i(BG;\mathbb{Z})$
generated by all complex representations
the Chern subgroup of $H^i(BG;\mathbb{Z})$.
The purpose of this paper is to prove Antieau's conjecture. But, 
we do not only prove Antieau's conjecture  but also give a simpler proof
for the case $p=3$ without using a computer algebra system.
\begin{theorem}\label{theorem:1.1}
Let $p$ be an odd prime number.
Let $\mu_p$ be the subgroup of the center of $SU(p^2)$ of order $p$.
Then, the Chern subgroup of 
\[
H^4(BSU(p^2)/\mu_p;\mathbb{Z})=\mathbb{Z}
\]
is
\[
p\cdot H^4(BSU(p^2)/\mu_p;\mathbb{Z})=p\cdot \mathbb{Z}.
\]
\end{theorem}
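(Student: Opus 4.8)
\section*{Proof proposal}

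The plan is to translate the statement into representation theory of $SU(p^2)$ and then into a congruence for Dynkin indices. Write $n=p^2$, let $T\subset SU(n)$ be a maximal torus with character lattice $P$, and let $T'=T/\mu_p\subset G:=SU(p^2)/\mu_p$, so that $H^*(BT';\mathbb{Z})$ is built on the index-$p$ sublattice $L=\{\lambda\in P:|\lambda|\equiv 0\bmod p\}$, where $|\lambda|$ denotes the number of boxes modulo $p^2$. A complex representation of $G$ is exactly a representation $V_\lambda$ of $SU(n)$ with $|\lambda|\equiv 0\bmod p$, and since $G$ is simple one has $c_2(V_\lambda)|_{BT}=\mathrm{ind}(V_\lambda)\,e_2$, where $e_2=c_2|_{BT}=e_2(x_1,\dots,x_n)$ and $\mathrm{ind}$ is the Dynkin index. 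I would first record $\mathrm{ind}(\Lambda^i\mathrm{std})=\binom{n-2}{i-1}$, which follows from the elementary identity $\sum_{w}w^2=-2\binom{n-2}{i-1}e_2$ summed over the weights of $\Lambda^i$.

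The first half of the proof identifies the generator. In a Smith basis $f_1=\bar x_1,\ f_j=\bar x_j-\bar x_1$ of $P$ with $L=\langle pf_1,f_2,\dots\rangle$, I would compute $e_2=-\tfrac12\sum_i \bar x_i^2$: because $n=p^2$ the coefficient of $f_1^2$ is $-\tfrac12 n(n-1)\equiv 0\bmod p^2$ and each coefficient of $f_1f_j$ is $-n\equiv 0\bmod p$, so that $e_2\in\mathrm{Sym}^2 L=H^4(BT';\mathbb{Z})$. Thus $c_2|_{BT}$ extends to $BT'$, and by naturality of the transgression in the Serre spectral sequence of $BSU(n)\to BG\to K(\mu_p,2)$ the class $c_2$ is a permanent cycle; hence the edge homomorphism $\rho:H^4(BG;\mathbb{Z})\to H^4(BSU(n);\mathbb{Z})=\mathbb{Z}c_2$ is an isomorphism and the generator $u$ restricts to $e_2$. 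Consequently the Chern subgroup equals $\gcd_\lambda \mathrm{ind}(V_\lambda)\cdot \mathbb{Z}u$, and it remains to show this greatest common divisor is $p$.

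The key step is the divisibility $p\mid \mathrm{ind}(V_\lambda)$ for every $\lambda$ with $|\lambda|\equiv 0\bmod p$, and I would package all Dynkin indices into a single ring homomorphism. Over $\mathbb{F}_p$, the map $\Phi:R(SU(n))\otimes\mathbb{F}_p=\mathbb{F}_p[\Lambda^1,\dots,\Lambda^{n-1}]\to \mathbb{F}_p[t]/(t^2)$ sending $\Lambda^i\mapsto \binom{n}{i}+\overline{\mathrm{ind}(\Lambda^i)}\,t$ is a well-defined algebra homomorphism, since $V\mapsto \dim V+\mathrm{ind}(V)t$ is multiplicative (the cross term vanishes as $\sum_w w=0$). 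Because $n=p^2$, Lucas' theorem gives $\binom{n}{i}\equiv 0\bmod p$ for all $1\le i\le n-1$, so $\Phi(\Lambda^i)=\overline{\mathrm{ind}(\Lambda^i)}\,t$ and every monomial of length $\ge 2$ maps into $t^2=0$. A charge-zero monomial, which spans $R(G)\otimes\mathbb{F}_p$, is therefore either of length $\ge 2$, hence killed, or a single $\Lambda^{kp}$; and a second application of Lucas to $\binom{p^2-2}{kp-1}$ shows $\mathrm{ind}(\Lambda^{kp})\equiv 0\bmod p$. Hence the $t$-component of $\Phi$ vanishes on $R(G)\otimes\mathbb{F}_p$, which is precisely $p\mid \mathrm{ind}(V_\lambda)$.

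Finally I would check sharpness. Kummer's theorem gives $v_p\binom{p^2-2}{p-1}=1$, so $\mathrm{ind}(\Lambda^p)$ contributes exactly one factor of $p$ and $p^2\nmid\gcd$. The adjoint representation descends and has index $2p^2$, prime to every odd prime $\ell\ne p$; and an elementary base-$2$ digit count produces some $\Lambda^{kp}$ of odd index, disposing of $\ell=2$. Combining these, $\gcd_\lambda \mathrm{ind}(V_\lambda)=p$, and the Chern subgroup is $p\cdot H^4(BG;\mathbb{Z})$. I expect the two applications of Lucas' theorem forcing the $t$-component of $\Phi$ to vanish to be the crux; the transgression argument making $\rho$ an isomorphism, and the base-$2$ computation handling $\ell=2$, are the secondary technical points.
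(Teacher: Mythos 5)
Your core argument coincides with the paper's, just phrased in the language of Dynkin indices: the central-character constraint (your assertion that charge-zero monomials span the image of $R(G)$ in $R(SU(n))$ is the paper's Proposition 2.2), the multiplicativity of $V\mapsto \dim V+\mathrm{ind}(V)\,t$ in $\mathbb{F}_p[t]/(t^2)$ (the paper's Lemma 3.2, proved there via the Chern character), and the two applications of Lucas' theorem to $\binom{p^2}{i}$ and to $\binom{p^2-2}{kp-1}$ are exactly the paper's computations; your index $\mathrm{ind}(\Lambda^i)=\binom{n-2}{i-1}$ is the coefficient $-\binom{p^2-2}{\ell-1}$ that the paper extracts by pulling back along the circle $x\mapsto \mathrm{diag}(x,x^{-1},1,\dots,1)$. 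Where you genuinely diverge is the isomorphism $H^4(BG;\mathbb{Z})\cong H^4(BSU(p^2);\mathbb{Z})$: the paper simply cites Antieau, whereas you prove it via the lattice computation $e_2\in\mathrm{Sym}^2L$ (correct, and this is precisely where $p$ odd enters, since $\tfrac12 p^2(p^2-1)$ must be divisible by $p^2$) plus naturality of the transgression comparing $BSU(n)\to BG\to K(\mu_p,2)$ with $BT\to BT'\to K(\mu_p,2)$. That argument works and makes the proof self-contained, but note that a permanent cycle only gives surjectivity of the edge map; injectivity additionally needs $H^4(K(\mathbb{Z}/p,2);\mathbb{Z})=0$ and the vanishing of the intermediate terms $E_\infty^{s,4-s}$, $1\le s\le 3$, which hold but should be stated.

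The genuine gap is in your sharpness step, at the prime $2$. You need the gcd of the indices over all representations of $G$ to equal $p$; after Kummer ($v_p\binom{p^2-2}{p-1}=1$) and the adjoint representation (index $2p^2$, which disposes of odd $\ell\neq p$), you assert that ``an elementary base-$2$ digit count produces some $\Lambda^{kp}$ of odd index.'' This is unproven, and it is not obvious: by Lucas mod $2$ it amounts to finding $k\in\{1,\dots,p-1\}$ such that the binary digits of $kp-1$ form a submask of those of $p^2-2$, a condition depending delicately on the binary expansion of $p^2-2$, and you give no argument that such a $k$ always exists. The repair lies inside your own framework and is what the paper does: the charge-zero element $(\Lambda^1)^{\otimes p}$ descends to $G$, and your multiplicativity formula gives $\mathrm{ind}\bigl((\Lambda^1)^{\otimes p}\bigr)=p\,(p^2)^{p-1}=p^{2p-1}$, which is an odd pure power of $p$; combined with $v_p\binom{p^2-2}{p-1}=1$ this forces the gcd to be exactly $p$ in one stroke, with no case analysis over auxiliary primes and no need for the adjoint representation at all.
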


As a corollary, we have the following result on the cycle map.
\begin{corollary}
\label{corollary:1.2}
Let $p$ be an odd prime number.
Let $\mu_p$ be the subgroup of the center of $SU(p^2)$ of order $p$.
Then, the cycle map 
\[
cl:CH^2BSU(p^2)/\mu_p \to H^4(BSU(p^2)/\mu_p;\mathbb{Z})=\mathbb{Z}
\]
is not surjective.
\end{corollary}

One of the key ingredients in Antieau's paper is the fact,
(\cite[Proposition 2.7]{antieau-2015}),
that the induced homomorphism
\[
\pi^*:H^4(BSU(p^2)/\mu_p;\mathbb{Z}) \to H^{4}(BSU(p^2);\mathbb{Z})
\]
induced by the obvious projection $\pi:SU(p^2)\to SU(p^2)/\mu_p$
is an isomorphism for all odd prime numbers $p$,
where, by abuse of notation, we denote  the map between classifying spaces 
$BSU(p^2)\to BSU(p^2)/\mu_p$ 
induced by the projection 
$\pi:SU(p^2)\to SU(p^2)/\mu_p$ by the same symbol $\pi$.
Antieau used $SU(p^2)$ to detect the integral cohomology group
$H^4(BSU(p^2)/\mu_p;\mathbb{Z})$.
In this paper, 
we replace $SU(p^2)$ by 
\[
S^1=\{ x \in \mathbb{C}\;|\; |x|=1\}.
\]
The complex representation ring $R(S^1)$ is $\mathbb{Z}[z, z^{-1}]$ 
and the integral cohomology ring 
$H^{*}(BS^1;\mathbb{Z})$ is $\mathbb{Z}[t]$ where $t$ is 
a generator of $H^2(BS^1;\mathbb{Z})=\mathbb{Z}$ and 
$z$ is represented by the canonical line bundle over the infinite dimensional complex 
projective space $\mathbb{C}P^\infty=BS^1$.
Both the complex representation ring of  $S^1$
and the integral cohomology ring of $BS^1$ are much simpler than 
those of $SU(p^2)$, $BSU(p^2)$, yet there exists a map
\[
\varphi_1:S^1 \to SU(p^2)
\]
such that the induced homomorphism
$(\pi\circ \varphi_1)^*$ detects the $4$-th integral cohomology group
of $BSU(p^2)/\mu_p$.
Because of the simplicity of 
the complex representation ring of $S^1$ and the integral cohomology ring of $BS^1$,
we can compute complex representations and their Chern classes 
without using a computer algebra system.
Also, in this paper, we consider algebra generators of the complex representation ring 
$R(SU(p^2))$ rather than 
irreducible representations. It also reduces the amount of our computations.

This paper is organized as follows:
In Section~\ref{section:2}, we give a necessary condition for an element $x$ in the complex representation ring
$R(SU(p^2))$ of $SU(p^2)$ to be in the image of
the induced homomorphism 
$$
\pi^*:R(SU(p^2)/\mu_p) \to R(SU(p^2)).
$$
In  Section~\ref{section:3}, 
we define the map $\varphi_1:S^1 \to SU(p^2)$ above. 
Then, we compute the second Chern class of $\varphi_1^*(x) \in R(S^1)$ modulo $p$ 
and complete the proof of Theorem~\ref{theorem:1.1}.

The author was informed by
Benjamin Antieau that  Arnav Tripathy also proved Theorem~\ref{theorem:1.1} independently. (\cite{tripathy-2016})


The author would like to thank Benjamin Antieau for sending an early draft of his preprint, giving comments and suggestions for the draft of this paper
and informing the author of Tripathy's work.  
Also the author would like to thank 
 the anonymous referee for his/her 
report, especially informing the author on the recent work of 
Charles and Pirutka, and comments
which improved the presentation of this paper.

\section{Representations}
\label{section:2}
In this section, we consider a necessary condition for an element $x$ in the complex representation ring $R(SU(p^2))$
to be in the image of the induced homomorphism
\[
\pi^*:R(SU(p^2)/\mu_p)\to R(SU(p^2)).
\]
We use the following commutative diagram:
\[
\begin{diagram}
\node{\mu_p} \arrow{e,t}{\Delta_0} \arrow{s,l}{\Delta_1} \node{T} \arrow{s,r}{\iota_0}
\\
\node{SU(p^2)} \arrow{e,t}{\iota_1} \arrow{s,l}{\pi} \node{U(p^2)} \\
\node{SU(p^2)/\mu_p,}
\end{diagram}
\]
where $T=S^1\times \cdots \times S^1$ (the product of $p^2$ copies of $S^1$'s)
is the maximal torus of the unitary group $U(p^2)$ 
consisting of diagonal matrices, $\mu_p$ is the cyclic group of order $p$, consisting of $p$-th roots of unity in $S^1\subset \mathbb{C}$, $\iota_0, \iota_1$ are obvious inclusion maps, and $\Delta_0$ is the diagonal map 
\[
\Delta_0(x)=(x,x, \dots,  x).
\]
Complex representation rings of $\mu_p, T, SU(p^2), U(p^2)$ are well-known. 
See for instance, 
Br\"{o}cker and tom Dieck's book 
 \cite[Chapter II, Section 8]{brocker-tomdieck-1995} and
Husemoller's book \cite[Chapter 14]{husemoller-1994}.
We recall that 
\begin{align*}
R(\mu_p)&
=\mathbb{Z}[\zeta]/(\zeta^p-1)
=\mathbb{Z}\{ 1,\zeta, \dots, \zeta^{p-1}\},
 \\
R(T)&
=\mathbb{Z}[ z_1, z_1^{-1}, \dots, z_{p^2}, z_{p^2}^{-1}]
=\mathbb{Z}[z_1, \dots, z_{p^2-1}, z_{p^2}, \lambda_{p^2}^{-1}], 
\\
R(U(p^2))&
=\mathbb{Z}[\lambda_1, \dots, \lambda_{p^2-1}, \lambda_{p^2}, \lambda_{p^2}^{-1}], 
\\
R(SU(p^2))&=\mathbb{Z}[\lambda_1, \dots, \lambda_{p^2-1}], 
\end{align*}
where
$\zeta$ is represented by the inclusion of $\mu_p$ to $S^1$,
 $z_i$ is $p_i^*(z)$, 
 $p_i:T\to S^1$ is the projection onto the $i$-th factor for $i=1, \dots, p^2$, 
$\lambda_{\ell}$ is the $\ell$-th symmetric function of $z_1, \dots, z_{p^2}$ and 
$\lambda_\ell$ is represented by the $\ell$-th exterior power representation.
It is clear from the definition of $\Delta_0$ that $\Delta_0^{*}(z_i)=\zeta$ for $i=1, \dots, 
p^2$. Hence, we have
\[
\Delta_1^*(\lambda_\ell)=\binom{p^2}{\ell} \zeta^\ell
\]
and
\[
\Delta_1^*(\lambda_{\ell_1}\cdots \lambda_{\ell_r})
=\binom{p^2}{\ell_1} \cdots \binom{p^2}{\ell_r}\zeta^{\ell_1+\cdots +\ell_r}.
\]

If $x\in R(SU(p^2))$ is in the image of the induced homomorphism 
\[
\pi^*:R(SU(p^2)/\mu_p) \to R(SU(p^2)),
\]
then $\Delta_1^*(x)$ is in 
\[
\mathbb{Z}\{ 1\} \subset \mathbb{Z}\{ 1, \zeta, \dots, \zeta^{p-1}\}=R(\mu_{p}).
\]


\begin{remark}
\label{remark:2.1}
Since 
\[
\displaystyle \Delta_1^{*}(\lambda_p)=\binom{p^2}{p} \zeta^{p}  =\binom{p^2}{p} 
\cdot 1
\in \mathbb{Z}\{1\},
\]
and since
\[
\displaystyle \Delta_1^*(\lambda_1^p)=\binom{p^2}{1}^p \zeta^p=\binom{p^2}{1}^p \cdot 1
\in \mathbb{Z}\{1\},
\]
there exist complex representations $y_1$, $y_2$ of $SU(p^2)/\mu_p$ such that
$\pi^*(y_1)=\lambda_p$ and $\pi^*(y_2)=\lambda_1^p$.
\end{remark}

Now, we consider a basis for  ${R}(SU(p^2))$.
Let $L=(\ell_1, \dots, \ell_r)$ be a  non-decreasing finite sequence of integers in $\{ 1, \dots, p^2-1\}$.
We define a monomial $\lambda_L$ to be 
\begin{align*}
\lambda_{\emptyset}&=1, \\
\lambda_{(\ell)}&=\lambda_\ell, \\
\lambda_{(\ell_1, \dots, \ell_r)}&=\lambda_{\ell_1}\cdots \lambda_{\ell_r} \quad \mbox{for $r\geq 2$.}
\end{align*}
Then, any element $x$ in ${R}(SU(p^2))$ is a linear combination of $\lambda_{L}$'s
and we have 
\[
x=\sum_{L} \alpha_{L}(x) \lambda_L, \quad \alpha_L(x) \in \mathbb{Z},
\]
where $L$ ranges over the set of non-decreasing finite sequences of positive
integers less than $p^2$
and $\alpha_{L}(x)$'s are zero except for a finite number of $L$'s.
 In this section, we prove the following proposition, which
gives us a necessary condition for $x\in {R}(SU(p^2))$ to be 
 in the image of $\pi^*$. The use of this proposition in the proof of Theorem~\ref{theorem:1.1}
 was suggested by Antieau to the author. Also, the author learned its proof from Antieau.
 

\begin{proposition}
\label{proposition:2.2}
Let $L=(\ell_1, \dots, \ell_r)$. 
Suppose that $x \in \mathrm{Im}\, \pi^*$.
If
$\ell_1+\cdots+\ell_r\not \equiv 0 \mod p$, then
\[
\alpha_L(x)=0.
\]
\end{proposition}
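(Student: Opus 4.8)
The plan is to promote the single restriction condition $\Delta_1^*(x)\in\mathbb{Z}\{1\}$ into the much stronger statement that $x$ is \emph{homogeneous of central degree zero}. The point to keep in mind is that $\mu_p$ is exactly the kernel of $\pi$, so every element of $\mathrm{Im}\,\pi^*$ is a virtual representation of $SU(p^2)$ on which the central subgroup $\mu_p$ acts trivially. This is genuinely stronger than what the displayed condition gives: because $\Delta_1^*$ sends $\lambda_L$ to the scalar $\binom{p^2}{\ell_1}\cdots\binom{p^2}{\ell_r}$ times $\zeta^{\ell_1+\cdots+\ell_r}$, it collapses all of $\lambda_L$ to a single integer, and by itself the condition $\Delta_1^*(x)\in\mathbb{Z}\{1\}$ only yields, for each residue $j\not\equiv 0$, one linear relation among the coefficients $\alpha_L(x)$ with $\ell_1+\cdots+\ell_r\equiv j$. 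So $\Delta_1^*$ alone cannot force the individual coefficients to vanish, and the argument must use the full central action rather than its trace.

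First I would equip $R(SU(p^2))$ with the $\mathbb{Z}/p$-grading by the character of the central $\mu_p$. Since $\mu_p$ is central, a fixed generator $z$ acts by a scalar on each irreducible summand, and these scalars multiply under tensor product; hence $R(SU(p^2))$ becomes a $\mathbb{Z}/p$-graded ring and $\Delta_1^*$ becomes a graded homomorphism into $R(\mu_p)=\mathbb{Z}[\zeta]/(\zeta^p-1)$. The computation $\Delta_1^*(\lambda_\ell)=\binom{p^2}{\ell}\zeta^\ell$ recorded above shows that $\lambda_\ell$ is homogeneous of degree $\ell \bmod p$; by multiplicativity the monomial $\lambda_L$ is homogeneous of degree $\ell_1+\cdots+\ell_r \bmod p$.

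Next I would use that the monomials $\lambda_L$ form a $\mathbb{Z}$-basis of the polynomial ring $R(SU(p^2))=\mathbb{Z}[\lambda_1,\dots,\lambda_{p^2-1}]$. Since each basis element is homogeneous, the grading is just the partition of this basis according to the residue of $\ell_1+\cdots+\ell_r$ modulo $p$, so the degree-zero summand $R_0$ is the free $\mathbb{Z}$-module on the monomials $\lambda_L$ with $\ell_1+\cdots+\ell_r\equiv 0 \bmod p$. Now if $x\in\mathrm{Im}\,\pi^*$, then because $z\in\ker\pi$ acts trivially on $x$ we have $x\in R_0$; comparing with the homogeneous basis gives $\alpha_L(x)=0$ whenever $\ell_1+\cdots+\ell_r\not\equiv 0 \bmod p$, which is the assertion.

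The step I expect to be the crux is the inclusion $\mathrm{Im}\,\pi^*\subseteq R_0$, that is, the homogeneity of $x$, as opposed to the weaker relation extracted from $\Delta_1^*$. The clean way to see it is to let $\tau$ denote the ring automorphism $\chi\mapsto\chi(z\,\cdot\,)$ of class functions; then $\tau(\lambda_L)=\omega^{\,\ell_1+\cdots+\ell_r}\lambda_L$ with $\omega=e^{2\pi i/p}$, while $\tau(x)=x$ for $x\in\mathrm{Im}\,\pi^*$ since $z\in\ker\pi$. As the $\lambda_L$ are $\mathbb{C}$-linearly independent class functions, equating coefficients in $\tau(x)=x$ forces $\alpha_L(x)(\omega^{\,\ell_1+\cdots+\ell_r}-1)=0$, and hence $\alpha_L(x)=0$ precisely when $\ell_1+\cdots+\ell_r\not\equiv 0 \bmod p$.
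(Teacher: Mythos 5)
Your proof is correct, but it runs along a genuinely different track from the paper's. The paper first reduces to irreducible representations (each irreducible summand of $\pi^*(V')$ again descends to $SU(p^2)/\mu_p$), and then appeals to structure theory: every irreducible $V$ of $SU(p^2)$ is a direct summand of some $\lambda_1^{\otimes n}$, so its restriction to the maximal torus is a sum of $\dim V$ monomials of degree $n$; hence its $\lambda_L$-expansion involves only those $L$ with $\ell_1+\cdots+\ell_r\equiv n \bmod p$, while $\Delta_1^*(V)=(\dim V)\zeta^n$ forces $p\mid n$ when $V\in\mathrm{Im}\,\pi^*$. Your argument needs neither the reduction to irreducibles nor the tensor-power and weight-homogeneity facts: the translation operator $\tau(\chi)=\chi(z\,\cdot\,)$ is multiplicative on class functions, satisfies $\tau(\lambda_\ell)=\omega^\ell\lambda_\ell$ by the elementary identity $\Lambda^\ell(\omega g)=\omega^\ell\Lambda^\ell(g)$ (so your closing version does not even invoke Schur's lemma), fixes $\mathrm{Im}\,\pi^*$ pointwise because $\pi(z)=e$, and it covers virtual representations in one stroke rather than through a decomposition of honest representations; the $\mathbb{C}$-linear independence of the $\lambda_L$ as class functions then annihilates every coefficient with $\ell_1+\cdots+\ell_r\not\equiv 0 \bmod p$. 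Conceptually the two proofs exploit the same $\mathbb{Z}/p$-grading of $R(SU(p^2))$ by the central character: the paper realizes it on the basis of irreducibles, at the cost of the representation-theoretic input above, whereas you realize it as an eigenspace decomposition for $\tau$ acting on the monomial basis, which is shorter and more self-contained, though it makes the underlying representation theory less visible. Your opening remark is also a correct diagnosis shared by both proofs: the displayed condition $\Delta_1^*(x)\in\mathbb{Z}\{1\}$ yields only one linear relation per nonzero residue class and cannot by itself force individual coefficients to vanish, so any proof must use the central action itself rather than its trace.
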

 
\begin{proof}
Let $V'$ be a complex representation of $SU(p^2)/\mu_p$. Then $\pi^*(V')$ is a direct sum of 
irreducible representations, say $\pi^*(V')=V_1\oplus \cdots \oplus V_s$. Since $\mu_p$ acts on 
$\pi^*(V)$ trivially, it acts trivially on each $V_i$. 
Therefore, there exists $V_i'$ in $R(SU(p^2)/\mu_p)$ such that 
$\pi^*(V_i')=V_i$. 
Since a complex representation ring is additively generated by irreducible representations,
to prove the proposition, it suffices to prove it for irreducible representations.
Let $V$ be an irreducible representation of $SU(p^2)$.
It is a direct summand of an $n$-fold tensor product of the standard representation $\lambda_1$
for some $n$.
The irreducible representation $V$ corresponds to a sum of $\dim(V)$ 
homogeneous monomials of degree $n$ in $R(T')$, where $T'=T \cap SU(p^2)$ is the maximal torus of $SU(p^2)$, so that
it is a linear combination of $\lambda_{\ell_{1}}\cdots \lambda_{\ell_r}$'s such that  $\ell_1+\cdots+\ell_r
= n $, where $\ell_i \in \{ 1, \dots, p^2\}$.
Since 
\[
\Delta_1^*(V)=(\dim V) \zeta^n,
\]
it is in 
\[
\mathbb{Z}\{1\} \subset \mathbb{Z}\{ 1, \zeta, \dots, \zeta^{p-1}\} =R(\mu_p)
\]
if and only if $n$ is divisible by $p$. This completes the proof.
\end{proof}

\section{Chern classes}
\label{section:3}
In this section, we prove Theorem~\ref{theorem:1.1}.
We define a map $\varphi_0:S^1\to T$ by
\[
\varphi_0(x)=(x, x^{-1}, 1, \dots, 1).
\]
The map $\varphi_1:S^1\to SU(p^2)$ mentioned 
in  Section~\ref{section:1} is defined by the following 
commutative diagram:
\[
\begin{diagram}
\node{S^1} \arrow{e,t}{\varphi_0} \arrow{s,l}{\varphi_1} \node{T} \arrow{s,r}{\iota_0} \\
\node{SU(p^2)} \arrow{e,t}{\iota_1} \node{U(p^2).}
\end{diagram}
\]
As in Section~\ref{section:2}, 
we denote algebra generators of $R(T)$ by $z_1, \dots, z_{p^2}$, 
those of $R(SU(p^2))$ by $\lambda_1, \dots, \lambda_{p^2-1}$ and 
those of $R(U(p^2))$ by $\lambda_1, \dots, \lambda_{p^2}, \lambda_{p^2}^{-1}$, where 
$\iota_0^*(\lambda_i)$ is the $i$-th symmetric polynomial in $z_1, \dots, z_{p^2}$
and $\iota_1^*(\lambda_\ell)=\lambda_\ell$ ($\ell=1, \dots, p^{2}-1$) and 
$\iota_1^*(\lambda_{p^2})=1$.
Let $z$ be the generator of $R(S^1)=\mathbb{Z}[z,z^{-1}]$ represented by the canonical line bundle and 
let $t$ be the generator of $H^{*}(BS^1;\mathbb{Z})$ as we did in Section~\ref{section:1}.
It is clear that $\varphi_0^*(z_1)=z$, $\varphi_0^*(z_2)=z^{-1}$ and 
$\varphi_0^*(z_i)=1$ for $i=3, \dots, p^2$.
For a vector bundle $\xi$ over a space $X$, we denote by 
\[
c(\xi)=1+c_1(\xi)+c_2(\xi)+\cdots \in H^{*}(X;\mathbb{Z})
\]
the total Chern class of $\xi$, where $c_i(\xi)$ denotes the $i$-th Chern class in 
$H^{2i}(X;\mathbb{Z})$.

\begin{proposition}
\label{proposition:3.1}
For $\ell=1, \dots, p^2-1$, we have
$
c_1(\varphi_1^{*}(\lambda_\ell))=0
$
 and 
$
c_2(\varphi_1^{*}(\lambda_{pk}))\equiv 0 \mod p,
$
where $0< k < p$.
\end{proposition}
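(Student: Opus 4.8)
The plan is to pin down $\varphi_1^{*}(\lambda_\ell)$ explicitly as an element of $R(S^1)=\mathbb{Z}[z,z^{-1}]$ and then read its Chern classes directly off a closed form for the total Chern class. First I would exploit the square defining $\varphi_1$: since $\iota_1\circ\varphi_1=\iota_0\circ\varphi_0$ and $\iota_1^{*}(\lambda_\ell)=\lambda_\ell$ for $\ell=1,\dots,p^2-1$, we have $\varphi_1^{*}(\lambda_\ell)=\varphi_0^{*}(\iota_0^{*}(\lambda_\ell))$. Because $\iota_0^{*}(\lambda_\ell)$ is the $\ell$-th elementary symmetric polynomial in $z_1,\dots,z_{p^2}$ and $\varphi_0^{*}$ sends $z_1\mapsto z$, $z_2\mapsto z^{-1}$ and $z_i\mapsto 1$ for $i\geq 3$, this collapses the whole computation to evaluating the $\ell$-th elementary symmetric polynomial at the arguments $z,z^{-1},1,\dots,1$ (with $p^2-2$ ones).

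Next I would extract the representation using the generating identity $\sum_{\ell}\varphi_1^{*}(\lambda_\ell)\,s^{\ell}=(1+sz)(1+sz^{-1})(1+s)^{p^2-2}$. Collecting the coefficient of $s^{\ell}$ shows that, up to a trivial summand, $\varphi_1^{*}(\lambda_\ell)$ consists of $m:=\binom{p^2-2}{\ell-1}$ copies of $z$ and $m$ copies of $z^{-1}$. Since $z$ is the canonical line bundle with $c_1=t$, we have $c(z)=1+t$ and $c(z^{-1})=1-t$, while the trivial summand contributes $1$; by multiplicativity of the total Chern class this gives the clean factorization
\[
c(\varphi_1^{*}(\lambda_\ell))=(1+t)^{m}(1-t)^{m}=(1-t^2)^{m}.
\]

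From this closed form the first two Chern classes are immediate. The expression $(1-t^2)^{m}$ has no term of odd degree in $t$, so $c_1(\varphi_1^{*}(\lambda_\ell))=0$ for every $\ell$, while its $t^2$-coefficient yields $c_2(\varphi_1^{*}(\lambda_\ell))=-\binom{p^2-2}{\ell-1}$ under the identification $H^4(BS^1;\mathbb{Z})=\mathbb{Z}\{t^2\}$. What then remains—and what I expect to be the crux of the argument—is the congruence $\binom{p^2-2}{pk-1}\equiv 0 \bmod p$ for $0<k<p$. I would settle this with Lucas' theorem: from the base-$p$ expansions $p^2-2=(p-1)p+(p-2)$ and $pk-1=(k-1)p+(p-1)$ one gets $\binom{p^2-2}{pk-1}\equiv\binom{p-1}{k-1}\binom{p-2}{p-1}\bmod p$, and the units-digit factor $\binom{p-2}{p-1}$ vanishes because $p-1>p-2$. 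Hence $c_2(\varphi_1^{*}(\lambda_{pk}))\equiv 0 \bmod p$, completing the proof.
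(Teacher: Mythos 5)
Your proof is correct and follows essentially the same route as the paper: both compute $\varphi_1^{*}(\lambda_\ell)$ as the $\ell$-th elementary symmetric polynomial evaluated at $(z,z^{-1},1,\dots,1)$, obtain the total Chern class $(1-t^2)^{\binom{p^2-2}{\ell-1}}$, and conclude with exactly the same application of Lucas' theorem using $p^2-2=(p-1)p+(p-2)$ and $pk-1=(k-1)p+(p-1)$. The only cosmetic differences are that you package the coefficient extraction as a generating-function identity where the paper counts monomials directly, and you read $c_1=0$ off the closed form $(1-t^2)^m$ whereas the paper deduces it from $H^2(BSU(p^2);\mathbb{Z})=\{0\}$.
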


\begin{proof}
The fact $c_1(\varphi_1^{*}(\lambda_\ell))=0$ is clear from the fact that $H^2(BSU(p^2);\mathbb{Z})=\{0\}$.
Let us use the convention that $\displaystyle \binom{a}{b}=0$ for $b<0$ and $b>a$.
Then, for $\ell=1, \dots, p^2-1$, 
the number of monomials of the form $z_1z_2z_{i_1}\cdots z_{i_{\ell-2}}$
 ($3 \leq i_1<\cdots <i_{\ell-2}\leq p^2$) is $\displaystyle \binom{p^2-2}{\ell-2}$,
both the number of monomials of the form
$z_1z_{i_1}\cdots z_{i_{\ell-1}}$ ($3 \leq i_1<\cdots <i_{\ell-1}\leq p^2$)
and the number of monomials of the form
$z_2z_{i_1}\cdots z_{i_{\ell-1}}$ ($3 \leq i_1<\cdots <i_{\ell-1}\leq p^2$)
are  $\displaystyle \binom{p^2-2}{\ell-1}$
and
the number of monomials of the form $z_{i_1}z_{i_2} \cdots z_{i_{\ell}}$ ($3 \leq i_1<\cdots < i_{\ell}\leq p^2$) is $\displaystyle \binom{p^2-2}{\ell}$.
Hence, we have 
\begin{align*}
\varphi_1^*(\lambda_{\ell}) & = \binom{p^2-2}{\ell-2} + \binom{p^2-2}{\ell-1} (z+z^{-1}) +\binom{p^2-2}{\ell}
\end{align*}
for $\ell=1, 2,\dots, p^2-1$.
 Therefore, the total Chern classes are given by  
 \begin{align*}
 c(\varphi_1^*(\lambda_\ell))&=\{(1-t)(1+t)\}^{ \binom{p^2-2}{\ell-1}}
 \\
 &=1-\binom{p^2-2}{\ell-1} t^2+\mbox{higher terms}, 
 \end{align*}
 where $\ell=1, \dots, p^2-1$ and higher terms 
 mean terms of topological degree greater than or equal to $6$.
 For $\ell=pk$, $0<k<p$, 
 by Lucas' theorem, since $p^2-2=(p-1)p+(p-2)$, $pk-1=(k-1)p+(p-1)$, we have
 \[
 \binom{p^2-2}{pk-1}\equiv \binom{p-1}{k-1} \binom{p-2}{p-1}\equiv 0 \mod p.
 \]
Therefore, we have $c_{2}(\varphi_1^{*}(\lambda_{pk}))\equiv 0 \mod p$.
 \end{proof}
 
 To compute second Chern classes of $\lambda_L$ for $r\geq 2$, we use the following lemma.
 
\begin{lemma}
\label{lemma:3.2}
Let $\xi, \eta$ be finite dimensional complex vector bundles over a CW complex of finite type. We assume that $H^{*}(X;\mathbb{Z})$ has no torsion and that the first Chern classes vanish, 
that is, $c_1(\xi)=c_1(\eta)=0$.
Then, we have 
\[
\dim \xi\eta = (\dim \xi)( \dim \eta), \;\;
c_1(\xi\eta)=0, \;\;
c_2(\xi\eta)=(\dim \eta) c_2(\xi)+(\dim \xi )c_2(\eta).
\]
\end{lemma}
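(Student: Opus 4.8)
The plan is to reduce to the case where $\xi$ and $\eta$ are sums of line bundles via the splitting principle, carry out a short symmetric-function computation with Chern roots, and then take care that the factor $\tfrac12$ which naturally appears is legitimate precisely because of the torsion-freeness hypothesis.

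First, the dimension formula $\dim\xi\eta=(\dim\xi)(\dim\eta)$ is immediate, since the fibre of the tensor product $\xi\eta$ is the tensor product of the fibres. Write $m=\dim\xi$ and $n=\dim\eta$. Next I would invoke the splitting principle: there is a map $f\colon Y\to X$, an iterated flag bundle, such that $f^*\xi$ and $f^*\eta$ both split as sums of line bundles and such that $f^*\colon H^*(X;\mathbb{Z})\to H^*(Y;\mathbb{Z})$ is injective. Moreover, by the Leray--Hirsch theorem $H^*(Y;\mathbb{Z})$ is a free module over $H^*(X;\mathbb{Z})$, hence again torsion-free. Since $f^*$ is injective and commutes with Chern classes and with the formation of dimensions and tensor products, it suffices to prove the two Chern-class identities after pulling back to $Y$, that is, when $\xi=L_1\oplus\cdots\oplus L_m$ and $\eta=M_1\oplus\cdots\oplus M_n$.

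Writing $x_i=c_1(L_i)$ and $y_j=c_1(M_j)$, the tensor product splits as $\xi\eta=\bigoplus_{i,j}L_i\otimes M_j$, so its Chern roots are the $x_i+y_j$. Then $c_1(\xi\eta)=\sum_{i,j}(x_i+y_j)=n\sum_i x_i+m\sum_j y_j=n\,c_1(\xi)+m\,c_1(\eta)=0$, using $c_1(\xi)=c_1(\eta)=0$. For the second Chern class I would apply the Newton identity $e_1^2-2e_2=p_2$ to the roots $x_i+y_j$; since $e_1=c_1(\xi\eta)=0$, this gives $2\,c_2(\xi\eta)=-p_2=-\sum_{i,j}(x_i+y_j)^2$. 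Expanding the square, the cross term $2\bigl(\sum_i x_i\bigr)\bigl(\sum_j y_j\bigr)=2\,c_1(\xi)c_1(\eta)$ vanishes, and using $\sum_i x_i^2=c_1(\xi)^2-2c_2(\xi)=-2c_2(\xi)$ together with its analogue for $\eta$, one obtains $2\,c_2(\xi\eta)=2\bigl(n\,c_2(\xi)+m\,c_2(\eta)\bigr)$.

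The main obstacle—indeed the only delicate point—is the final cancellation of the factor $2$. The computation above yields the desired identity only after multiplication by $2$, and dividing by $2$ is legitimate exactly because $H^*(Y;\mathbb{Z})$ has no torsion, in particular no $2$-torsion, which we arranged through the torsion-freeness of $H^*(X;\mathbb{Z})$ and Leray--Hirsch. Equivalently, one could note that torsion-freeness makes $H^*(X;\mathbb{Z})\to H^*(X;\mathbb{Q})$ injective and perform the entire computation rationally, where division by $2$ is unproblematic, and then descend. Either way, this is the single place where the hypothesis on $H^*(X;\mathbb{Z})$ is genuinely used, and a careless argument ignoring the $2$ would fail here.
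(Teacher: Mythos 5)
Your proof is correct, but it takes a genuinely different route from the paper. You use the splitting principle to reduce to sums of line bundles, compute with Chern roots and the Newton identity $p_2=e_1^2-2e_2$, and then divide by $2$, invoking torsion-freeness of $H^*(Y;\mathbb{Z})$ (via Leray--Hirsch) to justify the division. The paper instead uses multiplicativity of the Chern character: when $c_1=0$ one has $\mathrm{ch}(\xi)=\dim\xi-c_2(\xi)+\mbox{higher terms}$, so expanding $\mathrm{ch}(\xi\eta)=\mathrm{ch}(\xi)\,\mathrm{ch}(\eta)$ and comparing the degree-$4$ parts gives the formula at once; there the torsion-freeness is what lets one pass from the identity in rational cohomology (where the Chern character lives) back to integral cohomology. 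The Chern character argument is shorter and hides all the symmetric-function bookkeeping inside $\mathrm{ch}$; your argument is more elementary in that it needs only the splitting principle and the Whitney formula. One small caveat on your closing claim: the torsion hypothesis is not \emph{genuinely} necessary for the lemma. The class $c_2(\xi\otimes\eta)$ is given by a universal polynomial with \emph{integer} coefficients in the Chern classes of $\xi$ and $\eta$, namely
\[
c_2(\xi\otimes\eta)=n\,c_2(\xi)+m\,c_2(\eta)+\binom{n}{2}c_1(\xi)^2+\binom{m}{2}c_1(\eta)^2+(mn-1)\,c_1(\xi)c_1(\eta),
\]
where the division by $2$ is performed once and for all in the torsion-free ring of symmetric polynomials in the Chern roots, not in $H^*(X;\mathbb{Z})$. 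Setting $c_1(\xi)=c_1(\eta)=0$ then yields the conclusion for arbitrary $X$. So torsion-freeness is needed for your argument (and the paper's) as organized, but a slight rearrangement removes it; within the stated hypotheses your proof stands as written.
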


 \begin{proof}
 We consider the Chern character $\mathrm{ch}(\xi)$
 of a finite dimensional complex vector bundle $\xi$ over $X$.
  By the assumption of the lemma, Chern characters $\mathrm{ch}(\xi)$, $\mathrm{ch}(\eta)$
  are
given by  \begin{align*}\mathrm{ch}(\xi)&=\dim \xi -c_2(\xi)+\mbox{higher terms,}
 \end{align*}
and
 \begin{align*}
 \mathrm{ch}(\eta)&=\dim \eta -c_2(\eta)+\mbox{higher terms.}
 \end{align*}
 where higher terms mean terms of topological degree greater than or equal to $6$
 as in the proof of Proposition~\ref{proposition:3.1}.
 The Chern character satisfies 
 \[
 \mathrm{ch}(\xi \eta)=\mathrm{ch}(\xi)\mathrm{ch}(\eta)
 \]
 for any finite dimensional complex representations $\xi$, $\eta$ over $X$.
 Hence, we have
 \begin{align*}
 \mathrm{ch}(\xi)\mathrm{ch}(\eta)
 &=(\dim \xi)( \dim \eta) -\{(\dim \eta) c_2(\xi)+(\dim \xi) c_2(\eta)\}+\mbox{higher terms,}
  \end{align*}
and we have the desired result.
 \end{proof}
 
 \begin{proof}[Proof of Theorem~\ref{theorem:1.1}]
 As we mentioned in  Section~\ref{section:1}, 
 Antieau proved that the induced homomorphism 
\[
\pi^*:H^4(BSU(p^2)/\mu_p;\mathbb{Z})\to H^4(BSU(p^2);\mathbb{Z})
\]
 is an isomorphism. Since we proved
\[
c_2(\varphi_1^*(\lambda_1))=(-1)\cdot t^2
\]
in the proof of Proposition~\ref{proposition:3.1},
it is clear that the induced homomorphism 
\[
\varphi_1^*:H^4(BSU(p^2);\mathbb{Z}) \to H^4(BS^1;\mathbb{Z})
\]
is also an isomorphism.
Hence, to prove Theorem~\ref{theorem:1.1}, 
it suffices to show the following:
\begin{itemize}
\item[(1)] 
$c_2(\varphi_1^*(x))$ is divisible by $p$ for all 
$x\in \mathrm{Im}\,\pi^* \subset {R}(SU(p^2))$ and 
\item[(2)] 
there exists an element $y\in R(SU(p^2)/\mu_p)$ such that 
$c_2(\varphi_1^*(\pi^*(y)))=p \cdot t^2$.
\end{itemize}

First, we prove (1).
Let  $L=(\ell_1, \dots, \ell_r)$
be a non-decreasing sequence of positive integers less than $p^2$.
For the sequence $L=(\ell_1, \dots, \ell_r)$,  let
\[
\delta_{L}= \prod_{i=1}^{r} \dim \varphi_1^*(\lambda_{\ell_i}) \quad \mbox{and} \quad \delta_{L,i}= 
\dfrac{\delta_{L}}{\dim \varphi_1^{*}(\lambda_i)}.
\]
By definition, we have
\[
\dim \varphi_1^*(\lambda_\ell)=\dim \lambda_\ell=\binom{p^2}{\ell}.
\]
It is divisible by $p$. 
If $r\geq 2$, 
$
\delta_{L,i}
$
 is the product of 
 $\dim \varphi_1^{*}(\lambda_{\ell_1}), \dots,  \dim \varphi_1^{*}(\lambda_{\ell_r})$ 
 except for $\dim \varphi_1^{*}(\lambda_{\ell_i})$. 
 Therefore, it is divisible by $p$ for $i=1, \dots, r$, that is, 
\[
\delta_{L,i} \equiv 0 \mod p
\]
 for $r\geq 2$.
On the other hand, 
by Proposition~\ref{proposition:3.1},  
$\varphi_1^*(\lambda_\ell)$'s  satisfy the assumption of Lemma~\ref{lemma:3.2}.
Therefore, by Lemma~\ref{lemma:3.2}, we have 
\[
c_2(\varphi_1^{*}(\lambda_L))=
\sum_{i=1}^r \delta_{L,i}\;
c_2(\varphi_1^{*}(\lambda_{\ell_i})).
\]
Hence, for $r\geq 2$, $c_2(\varphi_1^{*}(\lambda_L))\equiv 0 \mod p$.
Thus, modulo $p$, 
$c_2(\varphi_1^{*}(x))$ is a linear combination of $c_2(\varphi_1^{*}(\lambda_{\ell}))$, where
$\ell=1,\dots, p^2-1$.
Since $x\in \mathrm{Im}\,\pi^*$,
by Proposition~\ref{proposition:2.2},  for $\ell$  not divisible by $p$, we have
$
\alpha_{(\ell)}(x) =0.
$
Therefore,  we have
\begin{align*}
c_2(\varphi_1^{*}(x))
&
\equiv 
\sum_{k=1}^{p-1} \alpha_{(pk)}(x) c_2(\varphi_1^{*}(\lambda_{pk}))
\equiv
0\mod p,
 \end{align*}
by Proposition~\ref{proposition:3.1}.

Next, we prove (2).
Recall Remark~\ref{remark:2.1}. 
There exist complex representations $y_1, y_2$ such that $\pi^{*}(y_1)=\lambda_p$, $\pi^*(y_2)=\lambda_1^p$.
Let $L=(1, \dots, 1)$ be the sequence of $p$ copies of $1$'s so that
$\ell_1=\cdots=\ell_p=1$.
Since $\dim \lambda_{\ell_i}=p^2$ for $i=1, \dots, p$, we have
$
\delta_{L}=p^{2p}
$
 and 
 \[
\delta_{L,i}=\dfrac{ \delta_L}{\dim \varphi_1^{*}(\lambda_{\ell_i})}=p^{2p-2}
 \]
 for $i=1,\dots, p$.
Recall that we proved \[
c_2(\varphi_1^{*}(\lambda_1))=(-1) \cdot t^2
\]
in the proof of Proposition~\ref{proposition:3.1}.
By Lemma~\ref{lemma:3.2}, we have
\[
c_2(\varphi_1^{*}(\lambda_1^p))=\sum_{i=1}^p -p^{2p-2} \cdot t^2=-p^{2p-1} \cdot t^2.
\]
On the other hand,  in the proof of Proposition~\ref{proposition:3.1}, we proved
\[
c_2( \varphi_1^{*}(\lambda_p))=-\binom{p^2-2}{p-1}\cdot  t^2.
\]
Since
\[
\displaystyle \binom{p^2-2}{p-1}=\dfrac{p^2-p}{p-1} \left( \prod_{i=2}^{p-1} \dfrac{p^2-i}{p-i}\right) \equiv p \mod p^2,
\]
the greatest common divisor 
of $\displaystyle \binom{p^2-2}{p-1}$ and $p^{2p-1}$ is $p$,  so that there exist
integers $\beta_1, \beta_2$ such that 
\[
\beta_1 \binom{p^2-2}{p-1}
+ \beta_2 p^{2p-1}=-p.
\]
Let $y=\beta_1y_1+\beta_2y_2$. Then, $y$ satisfies the required conditions
and (2) holds.
\end{proof}


\begin{thebibliography}{9}

\bibitem{antieau-2015}
B. Antieau, On the integral Tate conjecture for finite fields and representation theory, 
Algebr. Geom. {\bf 3} (2016), no.~2, 138--149.

\bibitem{atiyah-hirzebruch-1962}
M. F. Atiyah\ and\ F. Hirzebruch, Analytic cycles on complex manifolds, 
Topology {\bf 1} (1962), 25--45. 

\bibitem{brocker-tomdieck-1995}
T. Br\"ocker\ and\ T. tom Dieck, {\it Representations of compact Lie groups}, translated from the German manuscript, corrected reprint of the 1985 translation, Graduate Texts in Mathematics, 98, Springer, New York, 1995. 

\bibitem{charles-pirutka-2015}
F. Charles\ and\ A. Pirutka, La conjecture de Tate enti\`ere pour les cubiques de dimension quatre, Compos. Math. {\bf 151} (2015), no.~2, 253--264. 

\bibitem{colliot-thelene-szamuely-2010}
J.-L. Colliot-Th\'el\`ene\ and\ T. Szamuely, 
Autour de la conjecture de Tate \`a coefficients ${\bf Z}\sb {\ell}$ 
pour les vari\'et\'es sur les corps finis, in {\it The geometry of algebraic cycles}, 83--98, 
Clay Math. Proc., 9, Amer. Math. Soc., Providence, RI. 

\bibitem{husemoller-1994}
D. Husemoller, {\it Fibre bundles}, third edition, Graduate Texts in Mathematics, 20, Springer, New York, 1994.

\bibitem{kameko-2015}
M. Kameko, On the integral Tate conjecture over finite fields, Math. Proc. Cambridge Philos. Soc. {\bf 158} (2015), no.~3, 531--546. 

\bibitem{pirutka-yagita-2014}
A. Pirutka\ and\ N. Yagita, Note on the counterexamples for the integral Tate conjecture over finite fields, Doc. Math. {\bf 2015}, Extra vol.: Alexander S. Merkurjev's sixtieth birthday, 501--511.

\bibitem{totaro-1997} B. Totaro, Torsion algebraic cycles and complex cobordism, J. 
Amer. Math. Soc. {\bf 10} (1997), no.~2, 467--493. 

\bibitem{totaro-1999}
B. Totaro, The Chow ring of a classifying space, in {\it Algebraic $K$-theory
 (Seattle, WA, 1997)}, 249--281, Proc. Sympos. Pure Math., 67, Amer. Math. Soc., 
 Providence, RI. 

\bibitem{totaro-2014}
B. Totaro,  {\it Group cohomology and algebraic cycles},
Cambridge Tracts in Mathematics, 204, Cambridge Univ. Press, 2014.

\bibitem{tripathy-2016} 
A. Tripathy, Further counterexamples to the integral hodge conjecture, 
preprint, arXiv:1601.06170.
\end{thebibliography}
 \end{document}